\def\thmhead@plain#1#2#3{%
	\thmname{#1}\thmnumber{\@ifnotempty{#1}{ }\@upn{#2}}%
	\thmnote{ {\the\thm@notefont#3}}}
\let\thmhead\thmhead@plain
\newcommand{\tpitchfork}{%
	\vbox{
		\baselineskip\z@skip
		\lineskip-.52ex
		\lineskiplimit\maxdimen
		\m@th
		\ialign{##\crcr\hidewidth\smash{$-$}\hidewidth\crcr$\pitchfork$\crcr}
	}%
}
\newtheorem{theorem}{Theorem}
\newtheorem*{theorem*}{Theorem}
\newtheorem{lemma}{Lemma}
\newtheorem{definition}{Definition}
\newcommand{\ZZ}{\mathbb{Z}}
\newcommand{\RR}{\mathbb{R}}
\newcommand{\HH}{\mathbb{H}}
\newcommand{\eps}{\mathbb{\varepsilon}}
\title{Sparse maps}
\author{Elia Portnoy}
\date{}
\begin{document}
	\maketitle

	\begin{abstract} 
		We prove a generalization of the Kolmogorov-Barzdin theorem for maps from simplicial complexes into Euclidean space. Along the way we introduce the notion of sparse maps and discuss maps from simplicial complexes with controlled 1-waist.
	\end{abstract}
	
	\section*{\center Introduction}
	
	This paper is concerned with the following quantitative embedding problem. Suppose $Y$ is a simplicial complex with $V$ vertices and with a small number of simplicies adjacent to each vertex. What is the smallest number $R$, so that $Y$ admits a map into a ball of radius $R$ in $\RR^n$, for which the pre-image of each unit ball intersects a small number of simplices from $Y$? To make $R$ small we might have to stretch and fold $Y$ in some complicated way. In \cite{KB}, Kolmogorov and Barzdin proved the first such result about maps from graphs into $\RR^3$. They showed that for any graph $Y$ with $V$ vertices and degree at most $D$, there is an embedding of $Y$ into a ball of radius $V^{1/2}$ in $\mathbb{R}^3$, so that the pre-image of each unit ball intersects at most $C(D)$ edges of $Y$. Here $C(D)$ is some constant that only depends on $D$. They also found that this bound on the radius is sharp up to a constant for families of expander graphs. In \cite{GG}, Gromov and Guth generalized this result to maps from simplicial complexes as follows. Suppose $Y$ is a $d$-dimensional simplicial complex with $V$ vertices, so that each vertex lies in at most $D$ simplices. Then for $n \ge 2d+1$, $Y$ can be embedded into a ball of radius $V^{\frac{1}{n-d}}log(V)^{2d+2}$ in $\mathbb{R}^n$, so that the distance in between the images of any two simplices which do not share a vertex is at least $c(n,D)$. Here, $c(n, D)$ stands for a constant that only depends on $n$ and $D$. In particular, this means that the pre-image of each ball of radius $c(n,D)$ in $\RR^n$ intersects at most $D$ simplices in $Y$.
	
	Our main theorem concerns maps which are $m$-sparse, which is a strengthening of the condition that the pre-image of every unit ball intersects a small number of simplices. In the next section we will give a detailed definition of $m$-sparse maps. Roughly, a map from a $d$-dimensional simplicial complex $Y$ to $\RR^n$ is $m$-sparse with overlap $S$ if it satisfies two conditions. First, the $k$-skeleton of $Y$ gets mapped into the the $k$-skeleton of the unit lattice in $\RR^n$, for each $0 \le k \le d$. Second, each $m$-dimensional coordinate plane in the unit lattice intersects the images of at most $S$ $d$-simplices from $Y$ in a strongly non-transverse way. For example, if $Y$ is a set of vertices, then a $1$-sparse map from $Y$ into $\RR^2$ is a way to place these vertices on $\ZZ^2 \subset \RR^2$ so that each vertical and horizontal line intersects at most $S$ vertices. So if $Y$ is a set of $V$ vertices, then there is a 1-sparse map with overlap 1 from $Y$ to $[0,V]^2 \subset \RR^2$, where we injectively map the vertices to the diagonal of $[0,V]^2$. If $Y$ is a graph, a 1-sparse map with overlap $S$ that maps $Y$ to $\RR^n$ will map each edge of $Y$ to a path in the unit grid so that the pre-image of each straight line in the unit grid intersects at most $S$ edges from $Y$ in a 1-dimensional set. Kolmogorov's and Barzdin's proof actually shows that if a graph $Y$ has $V$ vertices and maximum degree $D$, then there is a 1-sparse map with overlap $C(D)$ from $Y$ to a ball of radius $V^{1/2}$ in $\RR^3$.  The larger $m$ is, the more simplices can be placed inside an $m$-plane, and so more restrictions are placed on a map to be $m$-sparse. Notice, however, that the definition of an $m$-sparse map contains no direct restrictions on how many simplices can intersect an $m$-dimensional plane transversely. Here is our main theorem. 
	
	\begin{theorem} \label{sparse} Fix three dimensions $0 \le d \le m < n$ and an integer $D > 0$. Suppose $Y$ is a $d$-dimensional simplicial complex with $V$ vertices, so that each vertex lies in at most $D$ simplices. Then there is a map
		
		$$F: Y \to [0, V^{\frac{1}{n-m}}]^{n} \subset \mathbb{R}^{n}$$
		
		\noindent which is $m$-sparse map with overlap $S(n,D)$, where $S(n,D)$ only depends on $n$ and $D$.
	\end{theorem}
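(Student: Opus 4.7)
The plan is to construct $F$ inductively on the skeleta of $Y$, sending the $k$-skeleton of $Y$ into the $k$-skeleton of the unit lattice inside $[0, L]^n$ with $L = C(d,m,n,D) V^{1/(n-m)}$. A first sanity check on this exponent: in each of the $\binom{n}{m}$ axis orientations there are $L^{n-m} \asymp V$ parallel coordinate $m$-planes, so if $V$ simplices are placed with constant load per plane then every orientation carries total mass $\Theta(V)$, matching the number of simplices exactly. This is tight, which already tells us that no logarithmic slack can be absorbed into the grid side length; the proof must be genuinely sharp at every step.

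For the $0$-skeleton I would place the $V$ vertices of $Y$ into $\ZZ^n \cap [0,L]^n$ so that each axis-aligned coordinate $m$-plane contains at most $C(n,m)$ vertices. This can be done either by restricting to a suitably ``diagonal'' or Sidon-like subset of the lattice, or by a random injection together with concentration and a union bound over the $\binom{n}{m} L^{n-m} \asymp V$ planes (the latter costing a $\log V$ factor that would then have to be removed by a local rebalancing argument). For the inductive step, given an $m$-sparse map of the $(k-1)$-skeleton into the $(k-1)$-skeleton of the lattice, I would extend it over each $k$-simplex by choosing a filling $k$-chain in the lattice $k$-skeleton whose boundary matches the boundary of the simplex. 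The crux is that these fillings can be chosen ``thin'' and well-distributed, so that each coordinate $m$-plane is touched only $O(1)$ times in total across all $k$-simplices.

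This is presumably where the second ingredient announced in the abstract, maps with controlled $1$-width, plays its role. A bound on the $1$-width of $Y$ (or of relevant subcomplexes arising inductively) should let one factor the construction as $Y \to G \to \RR^n$, where $G$ is a $1$-complex whose preimages in $Y$ are uniformly small; one then routes each fiber through a thin tubular corridor of the lattice. Because an axis-aligned $m$-plane intersects only a bounded number of such corridors, the resulting map is automatically $m$-sparse. The Kolmogorov--Barzdin construction for graphs is the $d=1$ prototype of this strategy, and the inductive content of the theorem is to bootstrap it from graphs to higher-dimensional complexes by reducing the dimension of $Y$ via its $1$-width.

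The main obstacle I anticipate is carrying out this inductive routing without losing a logarithmic factor at each skeletal level. A naive scheme that routes each $k$-simplex independently at random and controls loads by a Chernoff--union bound contributes a factor of $\log V$ per level, which is exactly the origin of the $\log(V)^{2d+2}$ excess in \cite{GG}. Eliminating it will require a more careful balancing argument, presumably deterministic routing through the $1$-width decomposition, or a Lov\'asz Local Lemma style bound exploiting the fact that $m$-planes that are far apart place essentially independent constraints. Making such an argument simultaneously control all $\binom{n}{m}$ orientations and all $d$ skeletal levels, while keeping the constants depending only on $d, m, n, D$, is the heart of the theorem.
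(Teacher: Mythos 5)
Your high-level skeleton --- build $F$ inductively on skeleta, map $k$-skeleton to lattice $k$-skeleton, choose fillings so that each coordinate $m$-plane carries $O(1)$ load, and avoid logarithmic slack by a deterministic balancing argument rather than random routing plus a union bound --- is aligned with the paper. But two things in your proposal misfire, and the crux step is left as a black box.

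First, the role of $1$-width. You suggest that controlled $1$-width is the ``second ingredient'' needed to prove Theorem~1, factoring the map as $Y\to G\to\RR^n$. In fact the paper uses $1$-width only for Theorem~2 (the bound $V^{1/n}W_1(Y)^{d/(n(n-d))}$); the proof of Theorem~1 makes no appeal to $1$-width at all. Grafting that decomposition onto Theorem~1 would need $W_1(Y)$ to be small, which is false in general (expander graphs already have $W_1\asymp V$), so this route cannot work for the statement as given.

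Second, the extension mechanism. The paper's actual device, which your proposal does not articulate, is the Kolmogorov--Barzdin ``height labeling'' generalized and nested inside a double induction. The outer induction steps $(d-1,m-1,n-1)\to(d,m,n)$: assume the $(d-1)$-skeleton is already mapped $(m-1)$-sparsely into a hyperplane $[0,R]^{n-1}\times\{0\}$ with $R\asymp V^{1/(n-m)}$, and extend to the $d$-simplices. The extension theorem is itself proved by induction on $n$ (with $d,m$ fixed): each $d$-simplex $\sigma$ receives an integer label $L(\sigma)\in[1,R]$, chosen by a greedy pass so that two simplices with the same label never share an $m$-plane in a $(d-1)$-dimensional set. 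This deterministic greedy succeeds because each $m$-plane meets at most $O(R)$ images $f(\partial\sigma')$ and each $f(\partial\sigma)$ meets at most $O(1)$ of the $(m-1)$-planes, so the number of forbidden labels is $O(R)$ with a constant that can be beaten by enlarging $C(d,m,n,D)$; this is exactly where the $\log V$ of a union-bound/LLL approach is avoided, and no rebalancing is needed. The filling of $f(\partial\sigma)$ is then a prism lifting $f(\partial\sigma)$ up to the slab $\{x_n=L(\sigma)\}$, followed by a projection into $\{x_{n-1}=0,x_n=L(\sigma)\}$, followed by a filling inside that $(n-1)$-dimensional slab produced by the inductive hypothesis (on $n-1$) applied to the subcomplex $Y_j$ of simplices labeled $j$, which by the label construction is $m$-sparse and has only $\lesssim V^{(n-m-1)/(n-m)}$ vertices. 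The base cases are the greedy vertex placement ($d=0$, your first step, which you correctly suspect is easy) and the $m=n$ case of the extension (handled by iterated projection-plus-cone inside the lattice). Your ``thin, well-distributed fillings'' intuition is pointing at this, but without the label-and-route-through-parallel-slabs mechanism and the size drop $V\to V^{(n-m-1)/(n-m)}$ that closes the inner induction, the proposal does not yet constitute a proof.
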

	
	If $d = m$ we get a result that is similar to Gromov's and Guth's theorem, but without the polylog factor. Namely, any $d$-dimensional simplicial complex $Y$ with $V$ vertices and at most $D$ simplices adjacent to any vertex, can be mapped into a cube of side-length $V^{\frac{1}{n-d}}$ in $\mathbb{R}^n$, so that the pre-image of each unit ball intersects at most a small number of simplices only depending on $n$ and $D$. Recall that in \cite{GG}, the authors assume that $2d+1 \le n$ and they impose the constraint that the distances between the images of any two disjoint simplices be at least some constant. We will show that a sparse map with $2d+1 \le n$, can be perturbed to also satisfy this constraint. So our notion of a $d$-sparse map can be seen as a generalization of the kind of maps considered in \cite{GG}. When $m > d$, \cref{sparse} constructs a map that I have not seen in the literature before. For example, if $d=1, m=2$ and $n=4$, this theorem says that we can wire any graph with $V$ vertices and maximal degree $D$ through a 4-dimensional unit grid of side-length $V^{1/2}$, so that each coordinate 2-plane has at most $S(D)$ edges wired along it somewhere non-transversely.
	
	The proof of \cref{sparse} is modeled after Kolmogorov's and Barzdin's original proof, but involves some inductive arguments. Here is the main idea for the case $d=1$ when $Y$ is a graph with $V$ vertices. Suppose we have we already have found a $(m-1)$-sparse map $f$ from the vertices of $Y$ into $[0, V^{\frac{1}{n-m}}]^{n-1} \times \{0\} \subset \RR^n$, and would like to extend $f$ to a $m$-sparse map $F: Y \to [0, V^{\frac{1}{n-m}}]^{n} \subset \RR^n$. $F$ will map each edge $e$ in $Y$ to a piece-wise linear path in $\RR^n$. To describe this path, we will choose an integer label $L(e)$ between $1$ and $V^{\frac{1}{n-m}}$ for each edge $e$ in $Y$. If $v_1$ and $v_2$ are the two boundary points of $e$, then the path $F(e)$ will contain two initial segments going from $f(v_i)$ to $f(v_i) + L(e)u_n$, where $u_n$ is the $n^{th}$ unit coordinate vector in $\RR^n$ and $i \in \{1,2\}$. To complete $F(e)$ we just need to connect the endpoints of these two segments with a path inside $[0, V^{\frac{1}{n-m}}]^{n-1} \times \{L(e)\}$. So after we add in these initial segments for each edge of $Y$, we reduce our original problem to constructing a $m$-sparse set of paths within each set $[0, V^{\frac{1}{n-m}}]^{n-1} \times \{j\}$, for $1 \le j \le V^{\frac{1}{n-m}}$. If we label our edges carefully, then the number of paths we need to construct in each hyperplane is an order of magnitude smaller than $V$. In the proof of \cref{sparse}, we will exploit such a reduction to set up a more general inductive argument. 
	
	In the next section we will explain why \cref{sparse} is sharp for simplicial complexes with large waists, called high dimensional expanders. Our next theorem offers one possible improvement on \cref{sparse} for simplicial complexes with a small waist. Define the 1-waist of a $d$-dimensional simplicial complex $Y$ as follows
	
	$$W_1(Y) =  \min_{f:Y \to \RR} \max_{p \in \RR} |f^{-1}(p)|$$
	
	\noindent where the min is over all continuous functions, and $|f^{-1}(p)|$ stands for the number of $d$-simplices that intersect the fiber $f^{-1}(p)$. For example, if a graph $Y$ has $V$ vertices and Cheeger constant $h$, then $W_1(Y)$ is bounded from below by roughly $hV$. So if $W_1(Y)$ is an order of magnitude smaller than $V$, then $Y$ is far from being an expander graph. Here is our next theorem.
	
	\begin{theorem}\label{smallwaist} Suppose $Y$ is a $d$-dimensional simplicial complex with $V$ vertices, so that each vertex lies in at most $D$ simplices. Then for $n \ge d+1$, there is a map
		
		$$F: Y \to [0,V^{\frac{1}{n}} W_1(Y)^{\frac{d}{n(n-d)}}]^n \subset \mathbb{R}^n$$
		
		\noindent so that the pre-image of each unit ball intersects at most $C(n, D)$ simplices, where $C(n,D)$ only depends on $n$ and $D$.
	\end{theorem} 
	
	\noindent The idea of the proof is to use the map which realizes the 1-waist to chop up $Y$ into pieces of size roughly $W_1(Y)$. Then we find a $d$-sparse map of each piece into a ball in $\RR^n$ using \cref{sparse}. We then arrange these balls in a row in $\RR^n$, map in the simplices that connect adjacent pieces to adjacent balls, and finally bend this row of balls into a larger ball in $\RR^n$. Note that if $W_1(Y)$ is roughly like $V$ then the bound in \cref{smallwaist} matches the one in \cref{sparse} for $d=m$.
	
	Here is an application of this theorem. Let $B_{\HH}$ be a ball of volume $V$ in hyperbolic plane. The area of a ball in hyperbolic plane is exponential in its radius, so $W_1(B_{\HH})$ is at most $polylog(V)$ (in fact, a result from \cite{H} shows that this waist is at most $log^2(1+V)$ times some constant). Therefore, \cref{smallwaist} says that there is a map from $B_{\HH}$ to a ball of volume $V polylog(V)$ in $\RR^{3}$, so that the pre-image of each unit ball in $\RR^{3}$ can be covered by a small constant number of unit balls in the hyperbolic plane. On one hand, such a map should expand many directions in $B_{\HH}$ by a factor of at least $V^{1/3} polylog^{-1}(V)$. On the other hand, the image of most unit balls in $B_{\HH}$ should be covered by about $polylog(V)$ balls in $\RR^3$. So this map must be very distorted. Based on this example the following question also seems interesting. Let  $B_{\HH^k}$ be a ball of volume $V$ in $k$-dimensional hyperbolic space. What is the smallest $R>0$, so that there is a map from $B_{\HH^k}$ into $[0,R]^n \subset \RR^n$, for which the pre-image of every unit ball can be covered by a small constant number of unit balls in $B_{\HH^k}$?
	
	Finally, we mention that another nice generalization of \cite{KB} and \cite{GG} is found in \cite{BH} and \cite{H}. There, the authors consider maps from simplicial complexes into symmetric spaces and products of trees, with a condition that is similar to having the pre-image of each unit ball be small. For example, one theorem there says that given any $d$-dimensional simplicial complex with $V$ simplices, where each vertex adjacent to at most $D$ simplices, there is a map from it to a product of $(d+1)$ binary trees $T^{d+1}$ (subdivided to be a simplicial complex), so that the image intersects at most $CVlog^d(1+V)$ simplices and the pre-image of each simplex in $T^{d+1}$ intersects at most $C$ simplices, where $C$ is some constant depending only on $n$ and $D$. The waists of several specific spaces are also more closely considered in \cite{H} under the name of topological overlap.    
	
	\par This paper is organized as follows. Section 1 contains the definition of sparse maps and a proof of \cref{sparse}. At the end of section 1 there is a  short discussion of why \cref{sparse} is sharp for high dimensional expanders and how to perturb sparse maps to separate the images of disjoint simplices. Section 2 contains the proof of \cref{smallwaist}.
	
	\section*{\centering Acknowledgments}
	I would like to thank Ting-Chun Lin for discussing embeddings of simplicial complexes with me. Ting-Chun Lin also mentioned some other methods that could lead to an improvement of the result in \cite{GG}. I was also like to thank Larry Guth for several helpful discussions about these topics, and Fedya Manin for pointing out that sparse maps can be perturbed to satisfy the condition from \cite{GG}. Finally, I would like to thank the anonymous referees who shared several helpful comments about the paper.
	
	\section*{\centering The main theorem for sparse maps}
	
	We begin this section by giving the definition of a $m$-sparse map. Let $\Lambda^n$ be the unit lattice in $\RR^n$ equipped with the natural skeletal decomposition. The $0$-skeleton of $\Lambda^n$ is $\mathbb{Z}^n$, the $1$-skeleton consists of all the edges between neighboring points of $\ZZ^n$ in the $n$ coordinate directions, and so on. The $k$-skeleton of $\Lambda^n$ will be denoted $\Lambda^n(k)$. A $k$-plane will denote any $k$-dimensional affine subspace in $\RR^n$ for which some $(n-k)$-coordinates are fixed integers. For example, $\{(x_1, 3, x_2, x_3, 2): (x_1, x_2, x_3) \in \RR^3\}$ is a 3-plane and is a subset of $\Lambda^5(3)$. Next we state a type of intersection that we will consider in order to define sparse maps.
	
	\begin{definition} Fix three dimensions $0 \le d \le m \le n$. Let $F$ be a map from a $d$-simplex $\sigma$ into $\RR^n$ and $H$ be an $m$-plane. We say that $H$ has a full intersection with the image of $\sigma$ if there is some open ball $B$ in the interior of $\sigma$, such that $F(B)$ lies inside $H$. In this case, we write $H \cap_f F(\sigma)$.
	
	More generally, suppose $F$ maps a $d$-dimensional simplicial complex $Y$ into $\RR^n$. Then we say that $H$ has a full intersection with $F(Y)$, if there is some $d$-simplex $\sigma$ in $Y$ with $H \cap_f F(\sigma)$. In this case, we write $H \cap_f F(Y)$.
	
	\end{definition}
	
	For example, if $q$ is a point in a plane $H$ and $F$ maps all of $\sigma$ to $q$, then $H$ has a full intersection with $F(\sigma)$.
	
	\begin{definition} 
		
		\par Fix three dimensions $0 \le d \le m \le n$ and an integer $S > 0$. Let $Y$ be a $d$-dimensional simplicial complex and denote the $k$-skeleton of $Y$ by $Y(k)$. A map $F: Y \to \RR^n$ is $m$-sparse with overlap $S$ if the following conditions are satisfied
		
		\begin{enumerate}
			\item For each dimension $0 \le k \le d$, we have $F(Y(k)) \subset \Lambda^n(k)$.
			\item For each $d$-simplex $\sigma$ of $Y$, there are at most $S$ $m$-planes which have a full intersection with $F(\sigma)$.
			\item For each $m$-plane $H$, there are at most $S$ $d$-simplices of $Y$ whose images have a full intersection with $H$.
		\end{enumerate}
	\end{definition}
	
	\noindent The first condition allows us to construct sparse maps combinatorialy, since their images are restricted to lie in a unit lattice. The second condition tells us that sparse maps are not too complicated when restricted to each simplex. The third condition restricts the number of simplices whose images are mapped along a single $m$-plane somewhere. Note, that the images of many simplices can cut across an $m$-plane transversely. This condition also implies that if $F$ is $m$-sparse then the pre-image of any unit ball intersects at most $C(n,D,S)$, simplices of $Y$, where $C(n,D,S)$ only depends on $n, D$ and $S$. This is because for any unit ball $B$ in $\RR^n$ and any $d$-simplex $\sigma$ of $Y$, whose image intersects $B$, there is an $m$-plane which intersects $B$ and has a full intersection with $\sigma$. However, there are at most ${n \choose m}$ $m$-planes that intersect any unit ball and each such $m$-plane has a full intersection with the images of at most $S$ $d$-simplices.
	
	Here is our main theorem about $m$-sparse maps.
	
	\begin{theorem*}[1] Fix three dimensions $0 \le d \le m < n$ and an integer $D > 0$. Suppose $Y$ is a $d$-dimensional simplicial complex with $V$ vertices, so that each vertex lies in at most $D$ simplices. Then there is a map
		
		$$F: Y \to [0, V^{\frac{1}{n-m}}]^{n} \subset \RR^{n}$$
		
		\noindent which is $m$-sparse with overlap $S(n,D)$, which only depends on $n$ and $D$.\footnote{Almost all constants in this paper will depend only on $n$ and $D$, but not on the size of the given simplicial complex. We include them to keep track of how the constants grow during the induction, but recommend ignoring them at a first reading.}
	\end{theorem*}
	
	\cref{sparse} will be proved using an inductive argument that involves $d,m$ and $n$. The base case will be $d=0$ and is given by the following lemma.
	
	\begin{lemma} \label{sparsevertex} Fix two dimensions $0 \le m < n$ and let $Y$ be a set of $V$ vertices. Then there is a map
		$$F: Y \to [0, V^{\frac{1}{n-m}}]^n \subset \RR^n$$
		\noindent which is $m$-sparse with overlap $2^n$.
	\end{lemma}
	
	\begin{proof} Note that for any $m$-plane $H$ and any point $v \in Y$ such that $F(v) \in H$, we have $H \cap_f F(v)$. We will show that such a $m$-sparse map $F$ exists by using a greedy algorithm. Order the vertices of $Y$ from $1$ to $V$. For some $1 \le k < V$, suppose we have found a way of mapping the first $(k-1)$ vertices of $Y$ into $\ZZ^n \cap [0, V^{\frac{1}{n-m}}]^n$ so that each $m$-plane contains at most $2^n$ vertices of $Y$. To find a place for the $k^{th}$ vertex, let us count the following set in two ways
	
	\begin{gather}
	Q = \{(\Pi, v, w):  \text{$\Pi$ is an $m$-plane, $v$ is one of the first $(k-1)$ vertices of $Y$}, \\
	v \in \Pi, w \in \Pi \cap \ZZ^n \cap  [0, V^{\frac{1}{n-m}}]^n\}
	\end{gather}
	
	\noindent The number of $m$-planes that contain one of the first $(k-1)$ vertices of $Y$ is at most ${n \choose m} (k-1)$. And the number of vertices in an $m$-plane in our domain is $V^{\frac{m}{n-m}}$ so we have
	
	$$|Q| \le (k-1) {n \choose m} V^{\frac{m}{n-m}} < 2^n V^{\frac{n}{n-m}}$$
		
	Now consider the function $\lambda: \ZZ^n \to \ZZ_{\ge 0}$, where $\lambda(p)$ counts how many of the first $(k-1)$ vertices of $Y$ are in some $m$-plane which contains $p$. We see that
		
	$$|Q| \ge \sum_{p \in \ZZ^n \cap [0, V^{\frac{1}{n-m}}]^n} \lambda(p)$$
		
	\noindent where the inequality can be strict if some $v$ and $w$, from the definition of $Q$, lie in several $m$-planes. So there must be some point $p \in \ZZ^n \cap [0, V^{\frac{1}{n-m}}]^n$ with $\lambda(p) < 2^n$. In other words, $p$ does not lie in an $m$-plane which contains $2^n$ of the first $(k-1)$ vertices of $Y$. Then we let $F$ map the $k^{th}$ vertex of $Y$ to this $p$. Continuing in this way we can construct the desired map $F$ which is $m$-sparse with overlap $2^n$.
	\end{proof}
	
	Our inductive argument will also involve a parameter called tightness which we now introduce.
	
	\begin{definition}  Let $\{\sigma_1, \sigma_2 \ldots \sigma_N\}$ be a set of $d$-simplices and set $X = \bigsqcup_{i=1}^N \partial \sigma_i$.
	Also fix a non-negative integer $q$ and a $q$-plane $\Pi$. A map $f: X \to \RR^n$ is $q$-tight with respect to $\Pi$, if for each $1 \le i \le N$, $f(\partial \sigma_i)$ is contained in some $q$-plane which is a parallel translate of $\Pi$. 
	\end{definition}
	
	For example, if for each $i$, $f$ maps $\partial \sigma_i$ into the $(n-1)$-plane $\RR^{n-1} \times \{i\}$ then $f$ is $(n-1)$-tight with respect to the $(n-1)$-plane $\RR^{n-1} \times \{0\}$. Next we state a lemma which tells us how to extend a map so that its tightness decreases, but its sparseness does not. This lemma takes a bit of time to state, but it is the crux of the argument. We also make a technical remark to avoid adding unnecessary notation. In the lemma we will deal with polyhedral complexes of the form $X \times I$, where $X$ is some simplicial complex and $I$ is an interval. Here, the polyhedra are of the form $\sigma \times I$ for some simplex $\sigma$. We will talk about sparse maps from such complexes and the definition of sparse maps generalizes in a straightforward way to polyhedral complexes.
	
	\begin{lemma} \label{tight} Fix  dimensions $0 \le d \le m < n$, $d \le q \le n-1$, and an integer $S > 0$. Let $\{\sigma_1, \sigma_2 \ldots \sigma_N\}$ be a set of $d$-simplices and set $X = \bigsqcup_{i=1}^N \partial \sigma_i$. Suppose for some $R > 0$ we are given a map
		
		$$g: X \to [0,R]^{n-1} \times \{0\} \subset \RR^n$$
		
		\noindent which is $(m-1)$-sparse map with overlap $S$. Furthermore, suppose that $g$ is $q$-tight with respect to some $q$-plane $\Pi \subset \RR^{n-1} \times \{0\}$ and suppose that $g(\partial \sigma_i)$ does not lie in an $(d-1)$-plane for $1 \le i \le N$. Then there is a map
		
		$$G: X \times [0,2] \to  [0, R]^{n} \subset \RR^n$$
		
		\noindent so that the following conditions hold
		
		\begin{enumerate}
			\item $G$ extends $g$ in the sense that, $G|_{X \times \{0\}} = g$
			\item $G(X \times \{2\}) \subset [0,R]^{n-2} \times \{0\} \times [0,R]$
			\item $G|_{X \times \{2\}}$ is $(q-1)$-tight with respect to a $(q-1)$-plane contained in $\RR^{n-2} \times \{0\} \times \RR$.
			\item $G|_{X \times \{2\}}$ is $(m-1)$-sparse with overlap $S'(n,S)$ as a map into $\RR^{n-2} \times \{0\} \times \RR$. Here $S'(n,S)$ is a constant which only depends on $n$ and $S$.
			\item $G$ is $m$-sparse with overlap $S'(n,S)$.
		\end{enumerate} 
		
	\end{lemma}

	Here is a rough picture of the lemma when $d=2, m=2$ and $n=3$. We start with a some loops $\{\partial \sigma_i\}$ mapped to the XY-plane plane inside $[0,R]^2 \times \{0\}$ with a $1$-sparse map $g$. Each loop will get a label $L(i)$, and will be extended in the Z-direction to a 2-dimensional cylindrical set $g(\partial \sigma_i) \times [0,L(i)]$. Here we slightly abuse notation and view $g(\partial \sigma_i)$ as a subset of $[0,R]^2$. The labels $L(i)$ will be chosen so that these sets are well spread out along the Z-direction. Then each such set will be further extended in the Y-direction, by adding in a cylindrical set which connects $g(\partial \sigma_i) \times {L(i)}$ to its linear projection in the Y-direction onto the line $[0,R] \times \{0\} \times \{L(i)\}$. The union of these cylindrical sets will be the images of $\{\partial \sigma_i \times [0,2]\}$ under the map $G$. Note that $G(\partial \sigma_i \times \{0\}) = g(\partial \sigma_i)$ lies in a plane, while $G(\partial \sigma_i \times \{2\})$ lies in a line. So the map $G$ can be viewed as a way of compressing the initial map $g$.

	\begin{proof} 

		The idea of the proof is to spread out the images of all the pieces of $X \times [0,2]$ across parallel hyperplanes. By rotating our coordinate system, without loss of generality, we can assume that our tightness plane $\Pi$ contains the $(n-1)^{st}$ coordinate direction $\{(0, \ldots 0, t, 0) \in \RR^n: t \in \RR \}$. The $i^{th}$ simplex $\sigma_i$ will be assigned a label $L(i)$ and $G(\partial \sigma_i \times [1,2])$ will be contained in $\RR^{n-1} \times \{L(i)\}$. We begin by showing that there is a labeling of the simplices $\sigma_i$ with $R$ labels, given as
		
		$$L: \{1, 2, \ldots N\} \to \{1, 2 \ldots R\}$$
		
		\noindent so that for each $m$-plane $H$ and each label $j$, there are at most $S''=n^3S^2$ simplices $\sigma_i$ with $L(i) = j$ and with $H \cap_f g(\partial \sigma_i)$. In other words, $g$ is roughly $m$-sparse with overlap $S''$ when restricted to the boundaries of the simplices with label $j$.
		
		To find such a labeling we again rely on the greedy algorithm. Suppose for some $1 \le k < N$, we have already chosen suitable labels for $\sigma_1, \sigma_2 \ldots \sigma_{k-1}$. Let's see how to choose a suitable label for $\sigma_k$. Call a label $j$ bad, if there is a $m$-plane $H$ such that $H \cap_f g(\partial \sigma_k)$ and
		
		$$|\{\sigma_i: L(i) = j \text{ and } H \cap_f g(\partial \sigma_i), \text{ for } 1 \le i < k\}| \ge S''=n^3S^2$$
		
		\noindent Assume for contradiction that each label is bad. Consider the set
		
		$$Q = \{(H, \sigma_i, j):  L(i) = j \text{ and } H \cap_f g(\partial \sigma_i)  \text{ and } H \cap_f g(\partial \sigma_k), \text{ for } 1 \le i < k\}$$
		
		Let us count this set in two ways. Since we assumed that each label was bad and there are $R$ labels, we have $|Q| \ge RS''$. On the other hand, since $g$ is $(m-1)$-sparse, there are at most $(d+1)S$, $(m-1)$-planes that have a full intersection with $g(\partial \sigma_k)$, and each of these is contained in at most $n$, $m$-planes. So there are at most $(d+1)nS$, $m$-planes that have full intersection with $g(\partial \sigma_k)$. Each of these $m$-planes contains at most $mR$, $(m-1)$-planes that also intersect $[0,R]^n$. And, since $g$ is $(m-1)$-sparse, each of these $(m-1)$-planes have a full intersection with at most $S$ of the $\partial \sigma_i$. So counting $Q$ in a another way, we get 
		
		$$|Q| \le (d+1)nS(mR)S < Rn^3S^2 = RS''$$ 
		
		This contradicts our lower bound on $Q$, and we conclude that some label $j'$ is not bad. Then by setting $L(k)= j'$, $L$ becomes a suitable labeling of the first $k$ simplices. Proceeding in this way, we can define $L$ on all the simplices.
		
		We can now describe the map $G: X \times [0,2] \to [0,R]^n$. For each simplex $\sigma_i$, $G(\partial \sigma_i \times \{1\})$ will look like a projection of $g(\partial \sigma_i)$ to the hyperplane $\RR^{n-1} \times \{L(i)\}$. Also, $G(\partial \sigma_i \times \{2\})$ will look like a projection of $g(\partial \sigma_i)$ to the $(n-2)$-dimensional plane $\RR^{n-2} \times \{0\} \times \{L(i)\}$. In between these projections, $G$ will be defined by linear interpolation. More specifically, for each $i$ and each $x \in \partial \sigma_i$ with $g(x) = (g_1(x), g_2(x), \ldots g_{n-1}(x), 0)$, for $t \in [0,1]$ define
		
		$$G(x, t) = (g_1(x), g_2(x), \ldots g_{n-2}(x), g_{n-1}(x), tL(i))$$
		
		\noindent and for $t \in [1,2]$ define
		
		$$G(x, t) = (g_1(x), g_2(x), \ldots g_{n-2}(x), (2-t)g_{n-1}(x), L(i))$$
		
		Let's make some observations about the map $G$. First $G(x,0) = g(x)$ for each $x \in X$, so $G$ is in fact an extension of $g$. Since each label is at most $R$ we see that $G(X \times \{2\})$ lies in $[0,R]^{n-2} \times \{0\} \times [0,R]$. So the first two conditions on $G$ are satisfied. Next, recall that for each simplex $\sigma_i$, $g(\partial \sigma_i)$ is contained in a parallel translate of a $q$-plane which contains the $(n-1)^{st}$ coordinate direction. By the construction, $G(\partial \sigma_i \times \{2\})$ then lies in a parallel translate of a $(q-1)$-plane. This $(q-1)$-plane is the image of a projection of the plane $\Pi$ which decreases its dimension. In other words, $G|_{X \times \{2\}}$ is $(q-1)$-tight.
		
		Next, we consider the sparseness of $G$. Observe that if $B$ is a tiny ball in a $d$-dimensional polyhedron of $X \times [0,2]$, then $G(B)$ contains a segment parallel to the $n^{th}$ or $(n-1)^{st}$ coordinate direction. This means that if $H$ is an $m$-plane that contains $G(B)$, then it must contain such a segment as well. Let us say that a plane $H$ contains the $k^{th}$ direction, if it contains a line parallel to the $k^{th}$ coordinate direction. Suppose first that $H$ is an $m$-plane that contains the $n^{th}$ direction, and so intersects $\RR^{n-1} \times \{0\}$ in some $(m-1)$-plane $H'$. Suppose $\sigma$ is some $(d-1)$-dimensional simplex of $X$. If $G(\sigma \times [0,1]) \cap_f H$, then we see that $g(\sigma) \cap_f H'$ as well. If $G(\sigma \times [1,2]) \cap_f H$, then $H$ contains the $n^{th}$ and $(n-1)^{st}$ direction. This implies that $G(\sigma \times \{1\}) \cap_f H$ and so $g(\sigma) \cap_f H'$ as before. Since $g$ is $(m-1)$-sparse, the number of $(d-1)$-simplices $\sigma$ for which we have 
		
		$$G(\sigma \times [0,2]) \cap_f H$$
		
		\noindent is at most $S < S''$.

		Now suppose that $H$ is an $m$-plane that contains the $(n-1)^{st}$ direction, but does not contain the $n^{th}$ coordinate direction. In this case $H \subset \RR^{n-1} \times \{j\}$ for some $j \in \ZZ$. Observe that $G(\sigma \times [0,1])$ cannot have a full intersection with $H$, for any simplex $\sigma$, because $H$ does not contain the $n^{th}$ direction. Next, note that if $G(\sigma \times [1,2]) \cap_f H$, then $G(\sigma \times \{1\}) 
		\cap_f H$, since $H$ contains the $(n-1)^{st}$ direction. So by our condition on the labeling $L$, the number of $(d-1)$-simplices $\sigma$ for which we have 
		
		$$G(\sigma \times [0,2]) \cap_f H$$
		
		\noindent is a most $S'=(d+1)S''$, since each $\partial \sigma_i$ has $(d+1)$, $(d-1)$-simplices. Putting the two cases together we see that any $m$-plane has a full intersection with at most $S'$ of the $G(\sigma \times [0,2])$. Recall that since $g$ is $(m-1)$-sparse the image of each $(d-1)$-dimensional simplex $\sigma$ under $g$ has a full intersection with at most $S$ $(m-1)$-planes. It follows that the image of any $d$-dimensional polyhedron in $X \times [0,1]$, has a full intersection with at most $S$ $m$-planes. This is because $G(\partial \sigma_i \times [0,1])$ was defined as a cylindrical extension of the set $g(\sigma_i)$. Similarly, the image of any $d$-dimensional polyhedron in $X \times [1,2]$, has a full intersection with at most $S$ $m$-planes. This shows that $G$ is a $m$-sparse map with overlap $S'$. 
		
		Finally, we check that $G|_{X \times \{2\}}$ is $(m-1)$-sparse. Suppose $H$ is an $(m-1)$-plane in $\RR^{n-2} \times \{0\} \times \RR$ so that $H \cap_f G(X \times \{2\})$. Let $H'$ be the $m$-plane made from $H$ by adding in the $(n-1)^{st}$ coordinate direction. Since $G$ is $m$-sparse, for a fixed $H$, there are at most $S'$ $(d-1)$-simplices $\sigma$ in $X$, such that $H' \cap_f G(\sigma \times [1,2])$. And so there are at most $S'$ $(d-1)$-simplices $\sigma$ in $X$, such that $H \cap_f G(\sigma \times \{2\})$. Similarly, for a fixed $\sigma$, there are at most $S'$ $(m-1)$-planes $H$ in $\RR^{n-2} \times \{0\} \times \RR$, such that $H' \cap_f G(\sigma \times [1,2])$. And so there are at most $S'$ $(m-1)$-planes $H$ in $\RR^{n-2} \times \{0\} \times \RR$, such that $H \cap_f G(\sigma \times \{2\})$.  This shows that $G|_{X \times \{2\}}$ is $(m-1)$-sparse which finishes the proof of the lemma.
	\end{proof}
	
	Next we'll prove an extension lemma which will serve as the inductive step in the proof of \cref{sparse}.
	
	\begin{lemma} \label{extension} Fix three dimensions $1 \le d \le m < n$ and let $Y$ be a $d$-dimensional simplicial complex with at most $V$ vertices, so that each vertex lies in at most $D$ simplices. Suppose that for some $R>0$ we have a map,
		$$f: Y(d-1) \to [0, R]^{n-1} \times \{0\} \subset \RR^{n}$$
		
		\noindent which is $(m-1)$-sparse with overlap $S$. Then there is an extension of $f$,
		
		$$F: Y \to [0, R]^n  \subset \RR^{n}$$
		
		\noindent which is $m$-sparse with overlap $S'(n,D,S)$, where $S'(n,D,S)$ only depends on $n, D$ and $S$.
	\end{lemma}
	
	\begin{proof} Let $\sigma_1, \sigma_2 \ldots \sigma_N$ be the $d$-simplices of $Y$ and let $X = \bigsqcup_{i=1}^N \partial \sigma_i$. We will view $f$ as a map from, this is because each $\partial \sigma_i$ is a subset of both $X$ and of $Y$. Note that $f$ is $(m-1)$-sparse with overlap $DS$ when viewed as a map from $X$. Since the constant $S'(n,D,S)$ is allowed to depend on $S$ and $D$, it suffices to assume that $Y = \bigsqcup_{i=1}^N \sigma_i$ and $Y(d-1)$ is the $X$ just defined. This is because we have already fixed our map on the boundaries of these simplices. We will construct the map $F$ as a concatenation of $n-1$ maps. The  $i^{th}$ map will be $F_i: X \times [0,2] \to [0,R]^n$ and the maps will satisfy the following properties
		
		\begin{enumerate}
			\item $F_{n-1}$ extends $f$ and for each $i$, $F_i$ extends $F_{i-1}|_{X \times \{2\}}$. In other words, 
			$$F_i|_{X \times \{0\}} = F_{i-1}|_{X \times \{2\}}$$
			\item $F_i|_{X \times \{2\}}$ is $(i-1)$-tight and has image in some side of the cube $[0,R]^n$.
			\item $F_i$ is $m$-sparse with constant $S_i(n,D,S)$, which only depends on $n,D$ and $S$.
		\end{enumerate}
		
		We now explain how to define these maps. Define $F_{n-1}$ by using \cref{tight} to extend the given $(n-1)$-tight map $f$. The map $f$ is $(n-1)$-tight simply because its image is contained in a $(n-1)$-plane. \cref{tight} guarantees that $F_{n-1}|_{X \times \{2\}}$ is $(n-2)$-tight, and that $F_{n-1}$ is $m$-sparse with overlap $S_{n-1}(n,D,S)$ which only depends on $n,D$ and $S$. Next, we define $F_{n-2}$ by using \cref{tight} to extend the $(n-2)$-tight map $F_1|_{X \times \{2\}}$. Strictly speaking, to apply \cref{tight}, we need to rotate the coordinate system so that the image of $F_{n-1}|_{X \times \{2\}}$ lies inside $[0,R]^{n-1} \times \{0\}$. \cref{tight} guarantees that $F_{n-2}|_{X \times \{2\}}$ is $(n-3)$-tight, and that $F_{n-2}$ is $m$-sparse with overlap $S_{n-2}(n,D,S)$ which only depends on $n,D$ and $S_{n-1}(n,D,S)$. We can continue in this way to define each $F_i$. Specifically, to define $F_{i-1}$ apply \cref{tight} to the $(i-1)$-tight map $F_i|_{X \times \{2\}}$, after possibly rotating the coordinate system. By \cref{tight}, $F_{i-1}$ is $m$-sparse with overlap $S_{i-1}(n,D,S)$, which depends on $n,D$ an $S_i(n,D,S)$. But since $i < n$, we see that each constant $S_{i}(n,D,S)$ only depends on $n,D$ and $S$. Also notice that $F_{1}|_{X \times \{2\}}$ is $0$-tight. So for each $d$-simplex $\sigma$ in $Y$, $F_{n-1}(\partial \sigma \times \{2\})$ is a point in $\ZZ^n$. 
		
		We now concatenate the $F_i$'s to define $F$. This concatenation can be written out explicitly if we parametrize each $d$-simplex $\sigma$ in $Y$ as
		
		$$\sigma = \{(x, t): x \in \partial \sigma, t \in [0, n-1]\}/ \{(x, 0): x \in \partial \sigma\}$$
		
		\noindent For $1 \le i \le n-1$, $x \in \partial \sigma$ and $t \in [0,1]$ define
		
		$$F(x, i - t) = F_i(x, 2t)$$
		
		\noindent This map is well defined since $F(\partial \sigma \times \{0\}) = F_{1}(\partial \sigma \times \{2\})$ is a point in $\ZZ^n$. Since each $F_i$ was obtained using \cref{tight}, the image of $F$ lies in $[0,R]^n$ and $F$ is $m$-sparse with overlap $S'(n,D,S)$, which is some constant that depends only on $n, D$ and $S$.\footnote{We can be a bit more specific about the constant $S'(n,D,S)$. Let $\alpha$ be a tower of exponentials with base 2 and height $n$. Then $S'(n,D,S)$ is very roughly bounded by $(nDS)^{\alpha}$. This constant seems far from optimal, but we did not attempt to improve it in this paper.}
	\end{proof}
	
	\begin{proof}[Proof of \cref{sparse}] We define the map $F$ inductively by skeleta. Let $R = V^{\frac{1}{n-m}}$. By \cref{sparsevertex}, there is a map from the vertices of $Y$ to $[0,R]^{n-d} \times \{0\}^{d}$, which is $(m-d)$-sparse with overlap $S_1 = 2^{n-d}$. Define $F$ on $Y(0)$ to be this map. Now suppose for some $0 \le k < d$, we have already defined $F$ to map $Y(k)$ into $[0,R]^{n-d+k} \times \{0\}^{d-k}$ and $F$ is $(m-d+k)$-sparse with overlap $S_k$ on $Y(k)$, where $S_k$ only depends only on $n, D$ and $S_{k-1}$. By \cref{extension}, we can extend $F$ to map $Y(k+1)$ into  $[0,R]^{n-d+k+1} \times \{0\}^{d-k-1}$ so that $F$ is $(m-d+k+1)$-sparse on $Y(k+1)$ with overlap $S_{k+1}$, where $S_{k+1}$ depends only on $n, D$ and $S_{k}$. But since $k$ is at most $d$, each constant $S_{k}$ only depends on $n$ and $D$. Proceeding in this way we define the map $F: Y\to [0,R]^n$ which is $m$-sparse with overlap $S_d$, where $S_{d}$ only depends on $n$ and $D$.\footnote{Let $\beta$ be a tower of exponentials with base 2 and height $n^2$. The final sparseness constant $S_d$ is very roughly bounded by $(nD2^n)^{\beta}$.}
	\end{proof}
	
	In fact, the proof of \cref{sparse} yields the following relative extension theorem, which will be useful in the next section.
	
	\begin{theorem} \label{sparse_extension}  Fix three dimensions $0 \le d \le m < n$ and an integer $D > 0$. Let $Y$ be a simplicial complex with $V$ vertices and each vertex lies in at most $D$ simplices. Let $X$ be a subcomplex of $Y$ and suppose we have a map
		
		$$f: X \to [0,V^{\frac{1}{n-m}}]^n  \subset \RR^{n}$$
		
		\noindent so that for each $0 \le k \le d$, $f(X(k)) \subset [0,V^{\frac{1}{n-m}}]^{n-d+k} \times \{0\}^{d-k}$ and $f$ is $(m-d+k)$-sparse with overlap $S$ on $X(k)$. Then we can extend $f$ to a map
		
		$$F: Y \to [0, V^{\frac{1}{n-m}}]^{n} \subset \RR^{n}$$
		
		\noindent which is $m$-sparse map with overlap $S'(n,D,S)$, which only depends on $n, D$ and $S$.
	\end{theorem}
	
	\cref{sparse} says that a simplicial complex with $V$ vertices, and a small number of simplices adjacent to each vertex, admits a $m$-sparse map with small overlap into a cube of side-length $V^{\frac{1}{n-m}}$ in $\RR^n$. This estimate on the side-length is sharp because there exists a family of complicated simplicial complexes called high dimensional expanders. We now briefly introduce this family of simplicial complexes. Define the $k$-waist of a $d$-dimensional simplicial complex $Y$ as follows
	
	$$W_k(Y) =  \min_{f:Y \to \RR^k} \max_{p \in \RR^k} |f^{-1}(p)|$$
	
	\noindent where the min is over all continuous map to $\RR^k$, and $|f^{-1}(p)|$ stands for the number of $d$-dimensional simplices that intersect the fiber $f^{-1}(p)$.  If $d \ge 2$, a family of $d$-dimensional simplicial complexes $\{Y_i\}_{i=1}^{\infty}$ is a called a high dimensional expander if the following conditions are satisfied.\footnote{There are several different notions of high dimensional expanders in the literature. The one used in this paper is often referred to as topological high dimensional expanders.}
	
	\begin{enumerate}
		\item The number of vertices in $Y_i$ is $V_i$ and $\{V_i\}_{i=1}^{\infty}$ is unbounded.
		\item The number of simplices adjacent to any vertex in any $Y_i$ is uniformly bounded by some number $D$, independent of $i \ge 1$.
		\item For each $i \ge 1$ we have, $W_d(Y_i) \ge cV_i$ for some constant $c > 0$ independent of $i$. 
	\end{enumerate}
	
	\noindent In \cite{GE}, Gromov investigated various waist properties of simplicial complexes and asked whether high dimensional expanders exist. In \cite{EK}, Evra and Kaufman remarkably gave an example of a family of high dimensional expanders in any dimension $d$. 
	
	Let $\{Y_i\}$ be our family of high dimensional expanders. Suppose for each $i \ge 1$, there was an $R_i > 0$ and a map $F_i: Y_i \to [0,R_i]^n \subset \RR^n$ which was $m$-sparse with overlap $S$. Let $\pi: \RR^n \to \RR^d$ be the orthogonal projection onto some $d$-dimensional subspace in the unit lattice. By the definition of a high dimensional expander, there must be some $p_i \in \RR^d$ so that $(\pi \circ F_i)^{-1}(p_i)$ intersects at least $cV_i$ $d$-simplices in $Y_i$. On the other hand, $\pi^{-1}(p_i)$ can intersect at most ${n \choose m}R_i^{n-m}$, $m$-planes inside $[0,R_i]^n$. Observe that if $(\pi \circ F_i)^{-1}(p_i)$ intersects some $d$-simplex $\sigma$ (in any way), then there is some $m$-plane $H$ such that $H \cap_f F_i(\sigma)$  and which intersects $\pi^{-1}(p_i)$. Because $F_i$ is $m$-sparse, $(\pi \circ F_i)^{-1}(p_i)$ can intersect at most $S{n \choose m}R_i^{n-m}$ $d$-simplices from $Y_i$. Putting these observations together we get
	
	$$cV_i \le S{n \choose m}R_i^{n-m}$$
	
	\noindent This shows that \cref{sparse} is sharp, up to a constant depending on $n$ and $D$, for high-dimensional expanders.
	
	To end this section we show that sparse maps of large codimension can me perturbed to satisfy the main constraint that appears in \cite{GG}, that is, the maps separate disjoint simplices. We remark that the maps we construct need not be an embedding on each simplex, but with a little bit more work they can probably be made into an embedding.
	
	\begin{theorem} \label{thick} Fix three dimensions $0 \le d < n$ so that $2d+1 \le n$ and fix an integer $D > 0$. Suppose $Y$ is a $d$-dimensional simplicial complex with $V$ vertices, so that each vertex lies in at most $D$ simplices. Then there is an map
		
	$$F: Y \to [0, V^{\frac{1}{n-d}}]^{n} \subset \mathbb{R}^{n}$$
	
	\noindent which satisfies the following condition: if $\sigma$ and $\tau$ are two $d$-simplices of $Y$ which do not share any vertices then 
	
	$$dist_{\RR^n}(F(\sigma), F(\tau)) \ge c(n,D)$$
	
	\noindent where $c(n,D)$ is some constant only depending on $n$ and $D$.
	\end{theorem}

	\begin{proof} By \cref{sparse}, we know that there is a $d$-sparse map $F': Y \to [0, V^{\frac{1}{n-d}}]^{n}$ with over lap $S = S(n,D)$. The proof of this theorem is just an application of the techniques from \cite{GG} applied to the map $F'$. We first subdivide the image of $F'$, and then randomly perturb this subdivision to a map $F$ with the desired property. Notice that the image of each simplex under $F'$ looks like a collection of cells (of possibly different dimensions) from the lattice $\Lambda^n$. This is evident from the construction of the maps in \cref{tight}, which are made from several projections. However, the image of each simplex need not be injective and can decrease dimensions. Let $Y'$ be the space obtained from $Y$ by taking the Stein factorization of each simplex. That is,
		
	$$Y' = Y / \{x \sim y : F'(x) = F'(y), x \in \sigma, y \in \sigma \text{ for some simplex $\sigma$ in Y}\} $$
	
	The map $F'$ evidently factors through $Y'$. Now notice that $Y'$ admits a natural cubical structure which is obtained by intersecting the image of each simplex of $Y$ with the lattice $\Lambda^n$. Next, we can subdivide this cubical structure to a simplicial structure, by subdividing each cube into at most $C$ simplices, for some constant $C$ only depending on $n$. From now on we will view $Y'$ as a simplicial complex with this structure. Next we will perturb the map $F'$, viewed as a map from $Y'$. Because $F'$ factors through $Y'$, this will induce a perturbation of $F'$ viewed as a map from $Y$. Notice that since $Y'$ is a simplicial complex, we can define the perturbation on the vertices and then extend the map linearly to the simplices. For each vertex $v \in Y$ choose a random vector $\rho(v)$ in a ball of radius $1$. Define the perturbation $F$ on $v \in Y'$ to be $v + \rho(v)$, and then extend $F$ to $Y'$ linearly.
	
	For any two simplices of $Y'$ (of any dimension) which do not share any vertices, $\sigma$ and $\tau$, let $Bad_{\eps}(\sigma, \tau)$ be the bad event that $dist_{\RR^3}(\sigma, \tau) \le \eps$. Proposition 3.12 from \cite{GG} tells us that the probability of $Bad_{\eps}(\sigma, \tau)$ is at most $C\eps$ for some constant $C$ only depending on $n$. The proof of this proposition relies on a fact about determinants of random matrices with bounded entries. Notice that if $dist_{\RR^3}(F'(\sigma), F'(\tau)) > 3$, then we also have $dist_{\RR^3}(F(\sigma), F(\tau)) > 1$, because the perturbation moves each vertex a distance of at most 1. Because $F'$ is $d$-sparse, each ball of radius 3 intersects at most $S'$ simplices of $Y'$, where $S'$ is some constant that depends only on $n$ and $S$. Thus, each bad event is not independent of at most $A$ other bad events, where $A$ depends only on $n$ and $S$. By the Lovasz-Local lemma, we can ensure that no bad event happens with positive probability if $\eps$ is small enough, depending only on $A$. If no bad events happen, then the images under $F$ of any two $d$-simplices of $Y$, which do not share any vertices, are $\eps$-separated and so the condition in the statement of the theorem is satisfied for $c(n,D) = \eps$.

	\end{proof}

	\section*{\centering Maps from simplicial complexes with controlled 1-waist}
	
	In this section we prove \cref{smallwaist}.
	
	\begin{theorem*} [2] Suppose $Y$ is a $d$-dimensional simplicial complex with $V$ vertices, so that each vertex lies in at most $D$ simplices. Then for $n \ge d+1$, there is a map
		
		$$F: Y \to [0, V^{\frac{1}{n}} W_1(Y)^{\frac{d}{n(n-d)}}]^n \subset \RR^n$$
		
		\noindent so that the pre-image of each unit ball intersects at most $C(n, D)$ simplices, where $C(n, D)$ only depends on $n$ and $D$.
	\end{theorem*}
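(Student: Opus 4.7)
Set $W = W_1(Y)$ and fix a continuous map $g \colon Y \to \RR$ with $|g^{-1}(t)| \le W$ for every $t \in \RR$. The plan is to chop $Y$ along level sets of $g$ into pieces each with at most $W$ vertices, map each piece into a small $n$-cube using \cref{sparse}, arrange the cubes in a row with padding regions between them, route the crossing simplices through the padding, and finally bend the row into an $n$-cube of the desired side length.

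Pick regular values $t_1 < \cdots < t_{k-1}$ of $g$ such that each induced subcomplex $Y_i$ on vertices with $g$-value in $(t_{i-1}, t_i)$ has at most $W$ vertices; then $k \le V/W + 1$, and each $Y_i$ still has degree at most $D$. Set $L = C(d, n, D) W^{1/(n-d)}$ and apply \cref{sparse} with $m = d$ to each $Y_i$, producing a $d$-sparse map $F_i \colon Y_i \to [0, L]^n$ whose unit-ball preimages intersect at most $C(d, n, D)$ simplices. By the definition of the $1$-width, at most $W$ simplices of $Y$ cross each fiber $g^{-1}(t_i)$; these are the crossing simplices whose vertex set straddles cut $t_i$.

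Translate the slab cubes so that the $i$-th one occupies $[2iL, (2i+1)L] \times [0, L]^{n-1}$, leaving a padding region of volume $L^n$ between adjacent cubes. The row lies in $[0, 2kL] \times [0, L]^{n-1}$. Route the at most $W$ crossing $d$-simplices at cut $t_i$ through padding region $i$ by the same kind of labelling-and-filling construction as in the proof of \cref{extension}: the padding has volume $L^n$ and the number $W = O(L^{n-d})$ of crossings matches the capacity of an $L^n$ box for sparsely placed $d$-simplices, so a greedy labelling in the short directions keeps the load on every unit ball bounded. Crossing simplices whose vertices lie in non-adjacent slabs are drawn as piecewise-linear $d$-simplices passing through the consecutive padding regions they straddle; the per-cut bound $W$ still controls the load on each region.

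Finally, bend the row into a cube. Its total volume is $2kL^n = O(V W^{d/(n-d)})$, so the target side length is $S := (2kL^n)^{1/n} = C(d, n, D) V^{1/n} W^{d/(n(n-d))}$, matching the theorem. Since $W \le V$ one has $L \le S$, so we can recursively fold along one short direction at a time: at stage $j$, cut the current long direction into segments of length $S$ and stack them along short direction $j + 1$. The remaining long length is multiplied by $L/S$ per stage, so after $n-1$ stages it equals $2kL \cdot (L/S)^{n-1} = (2kL^n)/S^{n-1} = S$, and the whole row fits in $[0, S]^n$. Each fold is a piecewise isometry, so the unit-ball preimage count at most doubles per fold and remains $C(d, n, D)$ overall. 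The main obstacle is the routing step: ensuring that crossing simplices, particularly those spanning many non-adjacent slabs, can be drawn through the padding regions without overloading any local region.
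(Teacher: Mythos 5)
Your decomposition differs from the paper's in a way that creates the very problem you flag at the end, and the paper has a specific device to make that problem disappear: it subdivides $Y$ first. After choosing the cut levels $p_0 < p_1 < \cdots < p_N$, the paper replaces $Y$ by a subdivision $Y'$ in which each fiber $f^{-1}(p_i)$ is a $(d-1)$-dimensional subcomplex, while the vertex count and local degree only grow by constants depending on $d$ and $D$. Once this is done there are \emph{no} crossing $d$-simplices at all: every $d$-simplex of $Y'$ lies entirely inside one slab $f^{-1}([p_i,p_{i+1}])$, and adjacent slabs interact only through the shared $(d-1)$-dimensional boundary complex $f^{-1}(p_i)$. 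The paper then builds the map skeleton by skeleton and slab by slab, applying \cref{extension} to the subcomplexes $Z^{k+1}_i = (Y'(k)\cap f^{-1}(p_i)) \cup (Y'(k+1)\cap f^{-1}(p_i,p_{i+1}])$, so the boundary data is already placed consistently for both slabs that meet it before the next extension step.

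In your version you take induced subcomplexes on the vertices of each slab and apply \cref{sparse} to each $Y_i$ independently. That leaves the crossing simplices (and all their crossing faces) completely unconstrained: the separate $F_i$'s were built with no coordination, so the endpoints you must join in the padding region are essentially arbitrary. Your proposed fix, ``the same kind of labelling-and-filling construction as in \cref{extension},'' does not directly apply because \cref{extension} extends a sparse map already given on the entire $(d-1)$-skeleton, whereas here you only have the $0$-skeleton of the crossing simplices; you would have to rebuild the whole skeletal induction inside the padding region, and a crossing edge of a crossing $d$-simplex may itself span several slabs, so the faces you need as input are themselves unrouted. Your volume/sparsity heuristic ($W = O(L^{n-d})$ matching the capacity of an $L^n$ box) is the right back-of-the-envelope check, but it does not produce the consistent boundary data the extension argument needs. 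You correctly identified the obstacle; the missing idea is the subdivision of $Y$ that turns crossing $d$-simplices into $(d-1)$-dimensional interfaces, after which \cref{extension} applies verbatim. The final bending step (your recursive folding versus the paper's bilipschitz snake map) is essentially the same; just be careful that a literal fold is not injective, so you need a snake/U-turn as in the paper.
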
 
	
	We first prove a small technical lemma.
	
	\begin{lemma} There is a map $f: Y \to [0,1]$ which is linear on each simplex of $Y$, injective on the vertices of $Y$, and each fiber of $f$ intersects at most $3W_1(Y)$ $d$-simplices of $Y$.
	\end{lemma}
	
	\begin{proof} Note that if we have a map from $Y$ to some compact interval, then scaling the image does not change the size of the fibers. Thus, we can assume without loss of generality that we have a map $f_1: Y \to [0,1]$ which realizes the 1-waist of $Y$, that is, each fiber of $f_1$ intersects at most $W_1(Y)$ $d$-simplices of $Y$. Define $f_2: Y \to [0,1]$ be the map which agrees with $f_1$ on the vertices of $Y$, and is linear on each simplex of $Y$. Notice that for any simplex $\sigma$ in $Y$, $f_2(\sigma) \subset f_1(\sigma)$. So each fiber of $f_2$ also intersects at most $W_1(Y)$ $d$-simplices of $Y$. Now define $f: Y \to [0,1]$ to be a perturbation of $f_2$, so that it is injective on the vertices, but still linear on each simplex of $Y$. If the perturbation is small enough, then for any $p \in \RR$ and small enough $\epsilon >0$, we get $|f^{-1}(p)| \le |f_2^{-1}(p-\epsilon)| + |f_2^{-1}(p)| + |f_2^{-1}(p + \epsilon)|$. So each fiber of $f$ intersects at most $3W_1(Y)$ $d$-simplices of $Y$. 
	\end{proof}
	
	\begin{proof}[Proof of \cref{smallwaist}] Let $f: Y \to [0,1]$ be the map from the previous lemma. Set $W = 3W_1(Y)$. Since $f$ is injective on the vertices of $Y$, we can choose points $0 = p_0  < p_1 < p_2 \ldots p_{N-1} < p_N=1$, so that for each $0 \le i < N$, $f^{-1}(p_i)$ does not contain any vertices of $Y$ and $f^{-1}((p_i, p_{i+1}])$ contains between $W$ and $W+D$ vertices of $Y$. Note that $N$ is equal to $VW^{-1}$ times some constant depending on $d$ and $D$. Since $f$ is linear on the simplices and injective on the vertices, each fiber $f^{-1}(p_i)$ can intersect a $d$-simplex of $Y$ in a polyhedron of dimension $(d-1)$ with at most $C(d)$ sides, where $C(d)$ is some constant that only depends on $d$. By the definition of the 1-waist, the fiber $f^{-1}(p_i)$ consists of at most $W$ such polyhedra. So, there is a subdividision of $Y$ that contains each $f^{-1}(p_i)$ in its $(d-1)$-skeleton, contains at most $C(d)V$ vertices, and each vertex in the subdivision is contained in at most $C(d)D$ simplices. Let $Y'$ be this subdivision of $Y$. It now suffices to find a map from $Y'$ to $\RR^n$ that satisfies the conditions of the theorem. 
		
		We begin by finding a map $F'$ from $Y'$ into a long rectangular prism in $\RR^n$. This rectangular prism will be divided into $N$ cubes, and $F'$ will map the subcomplex  $f^{-1}([p_i, p_{i+1}]) \subset Y'$ near the $i^{th}$ cube using a $d$-sparse map. For $r,t>0$, let $Q_r(t)$ denote the $n$-dimensional cube with side-length $r$ centered at the point $(t+r/2, r/2, r/2 \ldots r/2) \in \RR^n$. Notice that the bottom face of such a cube lies in $\RR^{n-1} \times \{0\}$. Fix $R$ to be the largest number of vertices in any subcomplex $f^{-1}([p_i, p_{i+1}])$ raised to the power of $\frac{1}{n-d}$. So $R$ is at most $W^{\frac{1}{n-d}}$ times a constant that depends only on $d$ and $D$. By \cref{sparse}, for each $i$ there is a map from the subcomplex $f^{-1}(p_i) \subset Y'$ to $Q_{R}(iR)$ which is $d$-sparse with overlap $S(n,D)$. By the proof of \cref{sparse} this map is $k$-sparse on the $k$-skeleton of $f^{-1}(p_i)$, and maps this $k$-skeleton into 
		
		$$Q_{R}(iR) \cap \RR^{n-d+k} \times \{0\}^{d-k}$$
		
		That is, it maps it into the the $(n-d+k)$-dimensional face of the $i^{th}$ cube. For every $i$, define $F'$ on $f^{-1}(p_i)$ to be this map. By \cref{sparse_extension}, for each $i$ there is a map that extends the map just defined on $f^{-1}(p_i) \cup f^{-1}(p_{i+1})$, to a map from $f^{-1}([p_i, p_{i+1}])$ into $Q_{2R}(iR)$, and this map is $d$-sparse with overlap $S'(n,D)$. For each $i$, define $F'$ on $f^{-1}([p_i, p_{i+1}])$ to be this map. If we define the following rectangular prism
		
		$$P = \bigcup_{i=1}^N  Q_{2R}(iR)$$
		
		\noindent then we see that $F'$ has been defined to map $Y'$ into $P$. As defined, $F'|_{f^{-1}([p_i, p_{i+1}])}$ is $d$-sparse with overlap $S'(n,D)$ and has image in the $i^{th}$ cube. So the $F'$-preimage of any unit ball intersects at most $S''(n,D)$ simplices, where $S''(n,D)$ only depends on $n$ and $D$.
		
		Recall that $R$ is roughly $W^{\frac{1}{n-d}}$ and that $N$ is roughly $VW^{-1}$. There are $N$ cubes in our rectangular prism $P$, so the volume of $P$ is $VW^{\frac{d}{(n-d)}}$ times some constant depending only on $n$ and $D$. Also one side of $P$ is roughly $N$ times longer than all the others, and we can assume that $N$ is some number much larger than $1$. So there is an injective map that snakes $P$ into a cube of roughly the same volume as $P$, which is $B(n, D)$-bilipschitz on every unit ball, where $B(n, D)$ is a constant that only depends on $n$ and $D$. Let this map be
		
		$$I: P \to [0, V^{\frac{1}{n}}W^{\frac{d}{n(n-d)}}]^n \subset \RR^n$$
		
		\noindent Define our final map to be $F = I \circ F': Y \to \RR^n$. By the properties of $I$ and $F'$, and since $Y'$ is a subdivision of $Y$, the number of simplices of $Y$ that can intersect any $F$ pre-image of a unit ball is some constant that only depends on $B(n,D), S''(n,D), n$ and $D$. In particular, it only depends on $n$ and $D$, and so $F$ satisfies the condition in the statement of the theorem.
	\end{proof}

\end{document}